\begin{document}
\baselineskip = 16pt

\newcommand \ZZ {{\mathbb Z}}
\newcommand \NN {{\mathbb N}}
\newcommand \RR {{\mathbb R}}
\newcommand \PR {{\mathbb P}}
\newcommand \AF {{\mathbb A}}
\newcommand \GG {{\mathbb G}}
\newcommand \QQ {{\mathbb Q}}
\newcommand \CC{{\mathbb C}}
\newcommand \bcA {{\mathscr A}}
\newcommand \bcC {{\mathscr C}}
\newcommand \bcD {{\mathscr D}}
\newcommand \bcF {{\mathscr F}}
\newcommand \bcG {{\mathscr G}}
\newcommand \bcH {{\mathscr H}}
\newcommand \bcM {{\mathscr M}}
\newcommand \bcJ {{\mathscr J}}
\newcommand \bcL {{\mathscr L}}
\newcommand \bcO {{\mathscr O}}
\newcommand \bcP {{\mathscr P}}
\newcommand \bcQ {{\mathscr Q}}
\newcommand \bcR {{\mathscr R}}
\newcommand \bcS {{\mathscr S}}
\newcommand \bcV {{\mathscr V}}
\newcommand \bcW {{\mathscr W}}
\newcommand \bcX {{\mathscr X}}
\newcommand \bcY {{\mathscr Y}}
\newcommand \bcZ {{\mathscr Z}}
\newcommand \goa {{\mathfrak a}}
\newcommand \gob {{\mathfrak b}}
\newcommand \goc {{\mathfrak c}}
\newcommand \gom {{\mathfrak m}}
\newcommand \gon {{\mathfrak n}}
\newcommand \gop {{\mathfrak p}}
\newcommand \goq {{\mathfrak q}}
\newcommand \goQ {{\mathfrak Q}}
\newcommand \goP {{\mathfrak P}}
\newcommand \goM {{\mathfrak M}}
\newcommand \goN {{\mathfrak N}}
\newcommand \uno {{\mathbbm 1}}
\newcommand \Le {{\mathbbm L}}
\newcommand \Spec {{\rm {Spec}}}
\newcommand \Gr {{\rm {Gr}}}
\newcommand \Pic {{\rm {Pic}}}
\newcommand \Jac {{{J}}}
\newcommand \Alb {{\rm {Alb}}}
\newcommand \Corr {{Corr}}
\newcommand \Chow {{\mathscr C}}
\newcommand \Sym {{\rm {Sym}}}
\newcommand \alb {{\rm {alb}}}
\newcommand \Prym {{\rm {Prym}}}
\newcommand \cha {{\rm {char}}}
\newcommand \eff {{\rm {eff}}}
\newcommand \tr {{\rm {tr}}}
\newcommand \Tr {{\rm {Tr}}}
\newcommand \pr {{\rm {pr}}}
\newcommand \ev {{\it {ev}}}
\newcommand \cl {{\rm {cl}}}
\newcommand \interior {{\rm {Int}}}
\newcommand \sep {{\rm {sep}}}
\newcommand \td {{\rm {tdeg}}}
\newcommand \alg {{\rm {alg}}}
\newcommand \im {{\rm im}}
\newcommand \gr {{\rm {gr}}}
\newcommand \op {{\rm op}}
\newcommand \Hom {{\rm Hom}}
\newcommand \Hilb {{\rm Hilb}}
\newcommand \Sch {{\mathscr S\! }{\it ch}}
\newcommand \cHilb {{\mathscr H\! }{\it ilb}}
\newcommand \cHom {{\mathscr H\! }{\it om}}
\newcommand \colim {{{\rm colim}\, }} % colimit
\newcommand \End {{\rm {End}}}
\newcommand \coker {{\rm {coker}}}
\newcommand \id {{\rm {id}}}
\newcommand \van {{\rm {van}}}
\newcommand \spc {{\rm {sp}}}
\newcommand \Ob {{\rm Ob}}
\newcommand \Aut {{\rm Aut}}
\newcommand \cor {{\rm {cor}}}
\newcommand \Cor {{\it {Corr}}}
\newcommand \res {{\rm {res}}}
\newcommand \red {{\rm{red}}}
\newcommand \Gal {{\rm {Gal}}}
\newcommand \PGL {{\rm {PGL}}}
\newcommand \Bl {{\rm {Bl}}}
\newcommand \Sing {{\rm {Sing}}}
\newcommand \spn {{\rm {span}}}
\newcommand \Nm {{\rm {Nm}}}
\newcommand \inv {{\rm {inv}}}
\newcommand \codim {{\rm {codim}}}
\newcommand \Div{{\rm{Div}}}
\newcommand \sg {{\Sigma }}
\newcommand \DM {{\sf DM}}
\newcommand \Gm {{{\mathbb G}_{\rm m}}}
\newcommand \tame {\rm {tame }}
\newcommand \znak {{\natural }}
\newcommand \lra {\longrightarrow}
\newcommand \hra {\hookrightarrow}
\newcommand \rra {\rightrightarrows}
\newcommand \ord {{\rm {ord}}}
\newcommand \Rat {{\mathscr Rat}}
\newcommand \rd {{\rm {red}}}
\newcommand \bSpec {{\bf {Spec}}}
\newcommand \Proj {{\rm {Proj}}}
\newcommand \pdiv {{\rm {div}}}
\newcommand \CH {{\it {CH}}}
\newcommand \wt {\widetilde }
\newcommand \ac {\acute }
\newcommand \ch {\check }
\newcommand \ol {\overline }
\newcommand \Th {\Theta}
\newcommand \cAb {{\mathscr A\! }{\it b}}

\newenvironment{pf}{\par\noindent{\em Proof}.}{\hfill\framebox(6,6)
\par\medskip}

\newtheorem{theorem}[subsection]{Theorem}
\newtheorem{conjecture}[subsection]{Conjecture}
\newtheorem{proposition}[subsection]{Proposition}
\newtheorem{lemma}[subsection]{Lemma}
\newtheorem{remark}[subsection]{Remark}
\newtheorem{remarks}[subsection]{Remarks}
\newtheorem{definition}[subsection]{Definition}
\newtheorem{corollary}[subsection]{Corollary}
\newtheorem{example}[subsection]{Example}
\newtheorem{examples}[subsection]{examples}

\title{Involutions on algebraic surfaces and the Generalised Bloch's conjecture}
\author{Kalyan Banerjee}

\address{HRI, India}

\email{banerjeekalyan@hri.res.in}

\begin{abstract}
In this note we are going to consider a smooth projective surface equipped with an involution and study the action of the involution at the level of Chow group of zero cycles.
\end{abstract}

\maketitle

\section{Introduction}
In this note we want to consider the generalised Bloch conjecture \cite{Vo}  [conjecture 11.19], which says that the action of a degree two correspondence on the Chow group of zero cycles on a smooth projective surface is determined by its cohomology class in $H^4(S\times S,\ZZ)$. This is equivalent to the following: let $\Gamma$ be a correspondence of codimension $2$ on $S\times T$ where $S,T$ are smooth projective surfaces over the field of complex numbers. Suppose that $\Gamma^*$ vanishes on $H^0(T,\Omega^2_T)$ then the homomorphism $\Gamma_*$ from $\CH_0(S)$ to $\CH_0(T)$ vanishes on the kernel of the albanese map $alb_S:\CH_0(S)\to Alb(S)$.

In  \cite{Voi}, the conjecture was proved for a symplectic involution on a $K3$ surface. In this paper the author consider an automorphism of order two $i$ of the given K3 surface, such that $i^*$ acts as identity on globally holomorphic $2$-forms, then $i_*$ acts as identity on $\CH_0$ of the K3 surface. Also the similar question was considered in \cite{G} for intersection of quadrics and cubics in $\PR^4$ which are examples of K3 surfaces. Also in
\cite{HK} the question was considered and proved for certain examples of K3 surfaces equipped with a symplectomorphism.

In this note we prove the following theorem:
\smallskip

\textit{Let $S$ be a smooth surface  admitting a $2:1$ map $f$ to a surface $F$  admitting an elliptic pencil such that the corresponding Jacobian fibration $J$ admits an ample line bundle $L$ such that the genus of the curves in $|L|$ are bounded by some positive integer $n_0$. Let $i$ be the involution on $S$ arising from the $2:1$ map. Then the group of invariants of $A_0(S)$, given by
$$\{z\in A_0(S):i(z)=z\}$$
is finite dimensional.}

\smallskip

Our method of proof goes in the line of the proof of Bloch's conjecture for surfaces not of general type with $p_g=0$ as in \cite{BKL} and of the arguments present in \cite{Voi}.

{\small \textbf{Acknowledgements:} The author would like to thank the ISF-UGC project for funding this project and also thanks the hospitality of Indian Statistical Institute, Bangalore Center for hosting this project. The author is indebted to Ramesh Sreekantan for suggesting this problem to the author and for many helpful discussions on the theme of the paper. Lastly the author is grateful to Chuck Weibel for constructive criticism on improving the exposition of the paper and for his advice to improve \ref{theorem2}.

We assume that the ground field is algebraically closed and of characteristic zero.}

%\section{Involution on an example of Hilbert modular surface}
%\label{section2}
%Let $K$ be a real quadratic field of discriminant $D$. Let $\mathfrak a$ be its ring of integers. The group $G=\bf{SL_2}(\mathfrak a)/\{-1,1\}$ acts on $\bcH\times \bcH$, where $\bcH$ is the upper half plane. The quotient space $\bcH^2/G$ can be made compact with finitely many cusp singularities. This will we a normal complex algebraic surface, which has finitely many singular points. Resolving the singularities one obtains a non-singular algebraic surface denoted by $S(D)$, corresponding to the discriminant $D$. So for any  discriminant $D$ the surface $S(D)$ is defined. It is known as the Hilbert modular surface with discriminant $D$. For a classification of Hilbert modular surfaces please see \cite{HZ}. Corresponding to $D=113$, the surface $S(D)$ is a surface of general type with geometric genus equal to $4$. It is known \cite{VG}[page 203] that the canonical map from $S(D)$ is a degree two morphism onto a cubic surface in $\PR^3$. So we have a natural involution $i$ acting on $S(D)$, we want to prove that $i_*$ at the level of Chow group of zero cycles is $-1$. For our convenience let us denote the surface $S(D)$ by $S$.

\section{The Bloch-Kas-Liebarman technique}

In this section we prove the following theorem:

\begin{theorem}
\label{theorem2}
Let $S$ be a smooth surface  admitting a $2:1$ map $f$ to a surface $F$  admitting an elliptic pencil such that the corresponding Jacobian fibration $J$ admits an ample line bundle $L$ such that the genus of the curves in $|L|$ are bounded by some positive integer $n_0$ . Let $i$ be the involution on $S$ arising from the $2:1$ map. Then the group of invariants of $A_0(S)$, given by
$$\{z\in A_0(S):i(z)=z\}$$
is finite dimensional.
\end{theorem}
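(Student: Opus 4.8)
The plan is to descend along the double cover $f$, via the push--pull formulas, to a finite--dimensionality statement for $\CH_0$ of the Jacobian fibration $J$, and then to prove that statement by the Bloch--Kas--Lieberman sweeping--out method applied to the ample system $|L|$. \emph{Step 1 (descent along $f$).} Since $f$ is finite of degree two with covering involution $i$, one has on zero--cycles $f^*f_*=\id+i_*$ and $f_*f^*=2$. Hence, for $z\in A_0(S)$ with $i_*(z)=z$, we get $2z=f^*(f_*z)$ with $f_*z\in A_0(F)$, so, writing $V=\{z\in A_0(S):i(z)=z\}$, we have $f^*A_0(F)\subseteq V$ and $2V\subseteq f^*A_0(F)$. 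The kernel of $f^*\colon A_0(F)\to A_0(S)$ is killed by $f_*f^*=2$, hence lies in $A_0(F)[2]$, which is finite by Roitman's theorem; likewise $V\cap A_0(S)[2]$ is finite. As finite--dimensionality of a subgroup of $A_0$ passes to subgroups and is unaffected by finite extensions, $V$ is finite--dimensional as soon as $A_0(F)$ is (and $A_0(F)$ finite--dimensional is conversely necessary, so the hypothesis on $F$, ultimately on $J$, is doing essential work). It thus suffices to prove $A_0(F)$ finite--dimensional.

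\emph{Step 2 (from $F$ to its Jacobian fibration).} Let $\pi\colon F\to B$ be the elliptic pencil and $J=\Pic^0_{F/B}$. A degree--$0$ cycle on $F$ can be moved, by the moving lemma along $\pi$, to a sum of cycles each supported on a single fibre; on a smooth fibre $F_b\cong J_b$ a degree--$0$ cycle is a point of $\Pic^0(F_b)$, and assembling these identifications --- using the difference morphism $F\times_B F\to J$ to absorb the torsor structure of $F$ over $B$ --- produces a correspondence between $F$ and $J$ inducing an isomorphism on $A_0$ up to the contribution of $A_0$ of curves in $B$, which is automatically finite--dimensional. So $A_0(F)$ is finite--dimensional iff $A_0(J)$ is; this is the elliptic--surface case of the argument of \cite{BKL}. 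The real care here is at the multiple and singular fibres of $\pi$ and in the fact that $F$ is only a torsor, not a group scheme, over $B$.

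\emph{Step 3 (finite--dimensionality of $A_0(J)$).} This is the heart, carried out in the spirit of \cite{BKL}. Because $L$ is ample the members of $|L|$ sweep out $J$, and each (after discarding a possible fixed part, which only lowers the genus) is a curve of arithmetic genus $\le n_0$. Fix a base point $o\in J$; for a general $x\in J$, join $x$ to $o$ by a chain of at most two members of $|L|$: if $\dim|L|\ge 2$, a single member through $x$ and $o$ suffices, and if $|L|$ is a pencil one routes through a point of its base locus, nonempty since $L^2>0$, which lies on every member. Thus $x-o\in A_0(J)$ is represented by a zero--cycle supported on a genus--$\le n_0$ curve varying algebraically in $|L|$; normalising these curves and spreading the relation over $|L|$, and using that $A_0$ of a smooth curve of genus $\le n_0$ is the image of its $n_0$--th symmetric product, one concludes as in \cite{BKL} that $\Sym^N J\times\Sym^N J\to A_0(J)$ is surjective for some $N$ bounded in terms of $n_0$ and $L^2$; that is, $A_0(J)$ is finite--dimensional. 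Running this back through Steps~2 and~1 proves the theorem.

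\emph{Where the difficulty lies.} Steps~1 and~2 are essentially bookkeeping once Roitman's theorem and the moving lemma are granted; the real obstacle is Step~3, and specifically the requirement that $|L|$ be genuinely large --- that it be positive--dimensional and that a general point of $J$ be joined to a fixed one by a \emph{bounded} chain of its own members. Ampleness of $L$ (so that $|L|$ moves, covers $J$, and its members meet) is exactly what is exploited, with the genus bound $n_0$ then making the resulting ``dimension'' finite; if instead $|L|$ reduced to a single curve, or its members failed to connect a general pair of points, the argument would collapse. A secondary delicate point is the comparison of $A_0(F)$ with $A_0(J)$ over the degenerate and multiple fibres of the pencil.
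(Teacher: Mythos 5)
Your Steps 1 and 2 are a legitimate (and in fact cleaner) packaging of what the paper does with its correspondence $\lambda g_*$: the paper builds the fibrewise Abel--Jacobi map $g\colon\wt S\to J$ and a quasi-inverse $\lambda$ and checks $\lambda g_*(z+iz)=2n(z+iz)-a$ with $a$ supported on a curve, whereas you descend via $f^*f_*=\id+i_*$ and then pass from $F$ to $J$ fibrewise; both routes reduce the theorem to finite--dimensionality of $A_0(J)$, modulo torsion handled by Roitman. (One small point in Step 1: $V/2V$ need not be finite just because $V[2]$ is; you should instead observe $V\subseteq(1+i_*)A_0(S)+A_0(S)[2]=f^*f_*A_0(S)+(\text{finite})$, using divisibility of $A_0(S)$.)

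The genuine gap is in Step 3, which is the heart of the matter. What you prove there is that each individual generator $x-o$ of $A_0(J)_{\deg 0}$ lies on a member (or a chain of two members) of $|L|$, hence is represented by a cycle of bounded size moving in the algebraic family $|L|$. This does not bound the size of a representative of an \emph{arbitrary} degree-zero cycle $\sum_{j=1}^k(x_j-o)$: each $x_j-o$ sits on its own curve $C_j\in|L|$, and nothing in your argument lets you merge these $k$ contributions into a single cycle of length independent of $k$. Indeed your Step 3 uses only that $L$ is ample and that the members of $|L|$ have bounded genus --- but by adjunction the (arithmetic, hence for general members geometric) genus of curves in $|L|$ equals $1+(L^2+L.K_J)/2$ for \emph{any} ample $L$ on \emph{any} surface, so your argument, if valid, would prove $A_0(J)$ finite--dimensional for every polarised surface $J$, contradicting Mumford's theorem for K3 surfaces. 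The missing mechanism, and the one the paper actually invokes, is the elliptic fibration structure of $J$ itself: over each elliptic fibre $J_t$ the map $\Sym^mJ_t\to J_t\cong\Pic^0(J_t)$ has fibres $\PR^{m-1}$, which forces the fibres of $\Sym^mJ\to A_0(J)$ to contain large projective spaces; these positive--dimensional fibres must meet the ample divisors $\pr_i^{-1}(C)$ for $C\in|L|$, and the paper runs a decreasing induction $J^m\rightsquigarrow J^{m-1}\times C\rightsquigarrow\cdots\rightsquigarrow J^{m-i}\times C^i$, stabilising once $i$ exceeds $g(C)$, to trade the unbounded $m$ for a bounded $l$. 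Your proposal never uses the elliptic structure of $J$ at this stage, so Step 3 as written does not close.
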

\begin{proof}
To prove that the group of $i$-invariants of  the Chow group of degree zero cycles of $S$, we follow the Bloch-Kas-Lieberman technique as presented in \cite{BKL}. First consider the pencil of elliptic curves on the surface $F$. That is a  map from $F\dashrightarrow L$, where $L$ is isomorphic to $\PR^1$.

Suppose that a pencil of curves on a surface $F$ can be given by choosing a projective line $L$ in $\PR(H^0(F,D)):=|D|$, where $D$ is a line bundle on $F$. So every element $t$ of this projective line gives rise to a global section $\sigma_t$ of $D$, which is non-zero and well-defined upto scalar multiplication. Let $F_t$ be the curve in $F$ defined by the zero locus of $\sigma_t$. Now let $\sigma_0,\sigma_{\infty}$ be two linearly independent global sections spanning the two dimensional vector subspace of $H^0(F,D)$, underlying the line $L$. Then any element $\sigma_t$ in this vector space\ look like $\sigma_0+t\sigma_{\infty}$. Now the rational map $F\to L$ is defined by
$$x\mapsto [\sigma_0(x):\sigma_{\infty}(x)]$$
and it is not defined along the common zero locus of $\sigma_0=\sigma_{\infty}=0$.
Consider the surface
$$\wt{F}=\{(x,t)\in F\times L|x\in F_t\}$$
this is nothing but the blow-up of $F$ along the base locus of the above rational map. Then sending $(x,t)$ to $t$ defines a regular map from $\wt{F}$ to $L$. That is we blow up the base locus of the rational map $F\dashrightarrow L$. Now consider the pull-back of $\wt{F}\to F$ to $S$, call it $\wt{S}$. Then $\wt{S}$ is nothing but the blow up of $S$ along the base locus of the rational map $S\dashrightarrow L$. Observe that fixing a point $0$ in on $F$, which is in the base locus of the pencil, we have  a section of $\wt{F}\to L$ given by $t\mapsto (0,t)$. Let us continue to denote the map from $\wt{S}$ to $\wt{F}$ by $f$.

Consider the Jacobian fibration $J\to L$ corresponding to $\wt{F}\to L$. Now fix a smooth hyperplane section $Y$ of $\wt{F}$ under the embedding of $\wt{F}$ in some $\PR^N$. Let $\pi$ be the morphism from $\wt{S}\to L$ and $\pi'$ is from $\wt{F}\to L$. Let us have
$$Y\cap \pi'^{-1}(t)=\sum_{i=1}^n p_i(t)$$
Then we have a  map $g$ from $\wt{S}$ to $J$
$$q\mapsto \alb_{t}(nf(q)-\sum_i p_i(\pi(q)))$$
where $\alb_t$ is the Albanese map from $F_t$ to $J_t$, $t=\pi(q)$. It is defined because the pencil $\wt{F}\to L$ has a section. This  map is dominant as it is dominant on fibers.

Now we recall the notion of finite dimensionality in the sense of Roitman \cite{R1}: Consider a correspondence $Z$ on $S\times S$, then the image of $Z_*$ from $A_0(S)$ to $A_0(S)$ is said to be finite dimensional if there exists a smooth projective variety $W$ and a correspondence $\Gamma$ on $W\times S$ such that the image of $Z_*$ is contained in the set:
$$\{\Gamma_*(w)|w\in W\}\;.$$

\begin{lemma}
Let the image of $\lambda g_*$ be finite dimensional in the above mentioned sense. Then the group of invariants under the action of $i$, in $A_0(S)$ is finite dimensional.
\end{lemma}
\begin{proof}
The proof of this lemma follows by arguing as in \cite{BKL}[proposition 4]. To prove the claim we have to  understand the quasi-inverse of  $g$ given by a correspondence on $\wt{S}\times J$. Let $\alpha$ belong to $J$ that lies over $t\in L$. View $\alpha$ as a zero cycle on $J_t$, that is it is an element in $\Pic^0(J_t)$ (by using the section for the Jacobian fibration). Since $\Pic^0(J_t)$ is isomorphic to $J_t$, there is a unique point in $q_{i}(t)$ on $F_t$ such that $q_i(t)-p_i(t)$ is rationally equivalent to  $\alpha$. Now $f^{-1}(q_i(t))=\{q_i'(t),q_i''(t)\}, f^{-1}(p_i(t))=\{p_i'(t),p_i''(t)\}$. So we can define $\lambda$ to be
$$\alpha\mapsto \sum_i (q_i'(t)+q_i''(t))-(p_i'(t)+p_i''(t)).$$

Let $q-p$ be a zero cycle where $q,p$ are closed points on $\wt{S}$.  Then we compute $\lambda g_*(q-p+i(q-p))$. We have by definition
$$g_*(q+iq-p-ip)=nf_{*}(q+iq-p-ip)-\sum_{i=1}^n 2 (p_i(\pi(q))-p_i(\pi(p))\;,$$
let $f_(q)=q', f(p)=p'$.
Then
$$\lambda g_*(q+iq-p-i(p))=\lambda[(2nq'-2np')-(2\sum_{i=1}^n (p_{i}(\pi(q)))-p_i(\pi(p)))]$$
which can be re-written as
$$2\sum_{i=1}^n \lambda(q'-(p_{i}(\pi(q))))+\lambda(p'-(p_{i}(\pi(p))))\;.$$
Now $\lambda(q'-(p_{i}(\pi(q))))=(q+iq-p_i'(\pi(q))-p_i''(\pi(q)))$.
Therefore $\lambda g_*(q+iq-p-i(p))$ is equal to
$$2n(q+i(q)-p-i(p))-a$$
where $a$ is a zero cycle supported on $Y'=f^{-1}(Y)$. So for general hyperplane section $Y$ of $\wt{F}$, we have $Y'$ a smooth projective curve.
Therefore by Chow moving lemma, for any zero cycle $z$ of degree zero on $\wt{S}$, we have
$$2n(z+iz)-\lambda g_*(z)\;,$$
is supported on the Jacobian of $Y'$.
Suppose that we can prove that the image of $\lambda g_*$ is finite dimensional in the sense that there exists a smooth projective variety $W$ and a correspondence $\Gamma$ on $W\times S$, such that the image of $\lambda g_*$ is contained in  the set

$$\{\Gamma_*(w):w\in W\}\;.$$
Then  we get that $2n(z+iz)$ is supported on $J(C)$ for a smooth projective curve $C$. Tensoring with $\QQ$ we get that
$z+iz$ is supported on $J(C)\otimes _{\ZZ}\QQ$. So it means that the group $A_0(\wt{S})^i$ of $i$-invariant elements in $A_0(\wt{S})$, is finite dimensional (rationally): in  the sense that there exists a smooth projective curve $C$, and a correspondence $\Gamma$ on $C\times \wt{S}$ such that $\Gamma_*$ from $J(C)\otimes_{\ZZ}\QQ$ to $A_0(\wt{S})^i\otimes _{\ZZ}\QQ$ is surjective.  Then by lemma 3.1 in \cite{GG} it follows that the group of $i$-invariant elements of $A_0(\wt{S})$ is finite dimensional. Since $\wt{S}$ is a blow-up of $S$ and finite dimensionality is a birational invariant, we have the group of $i$-invariant elements of $A_0(S)$ is finite dimensional.
\end{proof}

Now we prove the following:

\begin{lemma}
Let $\wt{S}, F$ be as above and $F$ is equipped with an elliptic pencil on it. Suppose there exists $L$ on $J$  ample and the genus of the curves in $|L|$ are bounded by some integer $n_0$. Let $\lambda, g$ be as above. Then the image of $\lambda g_*$ is finite dimensional.
\end{lemma}
\begin{proof}
Consider the symmetric power $\Sym^{m}\wt{S}$. Now given a zero cycle of the form $q-p$ on $\wt{S}$, we have each of $q,p$ belongs to a unique fiber for a general $q,p$. Let us fix a point $p_0$ which belongs to the exceptional locus of the blow up $\wt{S}\to S$. Then $p_0$ belongs to all the fibers of the fibration $\wt{S}\to L$. Then write $q-p=q-p_0-(p-p_0)$. Now consider the map from $\Sym^{m}\wt{S}$ to $A_0(\wt{S})$, given by
$$\sum_j P_j\mapsto \sum_j n((P_j-p_0)+i(P_j-p_0))\;. $$

Now by the above we have that
$$(P_j-p_0)+i(P_j-p_0)$$
belongs to  the Jacobian $J(C_t)$, such that $P_j,p_0$ belongs to $C_t$. Actually
$$(P_j-p_0)+i(P_j-p_0)$$
lands inside the $i$-invariant part of the involution on $J(C_t)$, denote it by $P_t$. Here $P_t$ has dimension $1$ as it is complementary to the $i$-anti-invariant part in $J(C_t)$. Indeed $J(C_t)$ has dimension
$$1+\deg(R)/2$$
by the Riemann-Hurwitz formula ($R$ is the ramification locus). Therefore the dimension of $i$-antiinvariant part is
$$\deg(R)/2,$$
hence the dimension of $P_t$ is $1$.

Therefore we have that the map from $\Sym^{m}\wt{S}$ to $A_0(\wt{S})$ factoring through the fibration $\prod_{i=1}^m \bcP$,
here $\bcP$ is the abelian fibration of the abelian varieties $\bcP_t$ over $L$. So the dimension of
$$\prod_{i=1}^m \bcP$$
is $2m$. Therefore the map from $\Sym^m \wt{S}$ to $\prod_{i=1}^m \bcP$ is generically finite.

Since the fibers of $\bcP\to L$ are of dimension $1$, we have that the fibers are elliptic curves. So we have
$$\prod_{i=1}^m \bcP$$
is a product of surfaces with elliptic fibrations. Note that $\bcP_t$ is isomorphic to $J_t$. Hence $\bcP\cong J$. Therefore the group $\im(\lambda g_*)$ is dominated by
$$\prod_{i=1}^m J$$
or by
$$\Sym^m  J\;,$$
where $m$ varies over natural numbers.
Now consider the following commutative diagram:

$$
  \diagram
   \Sym^m J_t\ar[dd]_-{} \ar[rr]^-{} & & \Sym^{m}J\ar[dd]^-{\theta} \\ \\
  A_0(J_t)\cong J_t \ar[rr]^-{} & & A_0(J)
  \enddiagram
  $$
Then the fibers of the right hand side vertical map contains the image of the fibers of the left-hand side vertical map, which are $\PR^m$ by the Riemann-Roch theorem (when $m\geq 2$). Therefore the fiber of the map
$$\Sym^m J\to A_0(J)\to \im(\lambda g_*)$$
contains a projective space $\PR^m$, as the map from $A_0(J)\to \im(\lambda g_*)$ is surjective. Actually the family of $\theta^{-1}(z+iz)$ such that $z$ is supported on $J_t$ contains a projective bundle over $J_t$. Now we choose an ample smooth projective curve $C$ in $|L|$. Since $K_J.L+L^2$ is bounded, by adjunction formula, we have that the genus of $C$ is bounded and less than $m$. We choose $m$ to be much bigger than the genus of $C$, then the divisor $\sum_i pr_i^{-1} C$ on $J^m$ intersects the fibers of the map
$$J^m \to \Sym^m J\to A_0(J)\to \im(\lambda g_*)\;.$$
Therefore the image of the above map is actually the image of $J^{m-1}\times C$. Consider the map
$$J^{m-1}\to J^{m-1}\times C\to \im(\lambda g_*)\;.$$
Then again by the previous process the fibers of the above map are supported on
$$J^{m-2}\times C^2\;.$$
By continuing this process, the fibers of the map
$$J^m\to \im(\lambda g_*)$$
are supported on $$J^{m-i}\times C^i\;.$$
Here $i$ is the number of iterations such that  $i$ is less than the dimension of the fiber of the map
$$\Sym^m J_t\to J_t$$
which is by Riemann-Roch theorem, equal to $m$. Therefore, when $i> g(C)$,
the image of
$J^m$ under $\lambda $
is same as $J^{m-i}\times C^i$, which is same as $J^{m-i}\times C^g$.
Therefore we have that
$$\lambda(J^m)=\lambda(J^{m-i+g})$$
 So the image of $J^m$ under $\lambda$ is same as the image of $J^l$ under $\lambda$ for $l<m$. Hence the image of $\lambda g_*$ is finite dimensional by \cite{Voi}[lemma 3.1].

\end{proof}

\end{proof}

\begin{corollary}
\label{cor1}
Let $S$ be the branched double cover of an elliptic K3 surface $F$, let $i$ be the involution on $S$ involution $i$. Then the corresponding Jacobian fibration $J$ associated to the elliptic pencil does not have an ample line bundle $L$ on it, such that it satisfies the following property:

Given any positive integer $m$ as in the above theorem, the number of iterations $i$ is greater than $(L^2+L.K_J)/2+1$.
\end{corollary}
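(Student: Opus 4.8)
The plan is to deduce the statement from Theorem \ref{theorem2} by contradiction, the one extra ingredient being Mumford's theorem on zero-cycles of surfaces with positive geometric genus. First I would record the geometry: write $f\colon S\to F$ for the $2:1$ branched map, with branch divisor $B=2\mathcal L$ for a line bundle $\mathcal L$ on $F$. Since $F$ is a K3 surface, $K_F=\mathcal O_F$, so the ramification formula gives $K_S=f^{*}(K_F\otimes\mathcal L)=f^{*}\mathcal L$, and together with $f_{*}\mathcal O_S=\mathcal O_F\oplus\mathcal L^{-1}$ this gives $H^0(S,K_S)=H^0(F,\mathcal L)\oplus H^0(F,\mathcal O_F)$, so in particular $p_g(S)\geq 1$. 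The elliptic pencil on $F$ produces, after the blow-ups of Theorem \ref{theorem2}, the fibration $\wt S\to L$ with associated Jacobian fibration $J\to L$, exactly the setup of that theorem.

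Next I would show that $A_0(S)^{i}$ is already infinite dimensional for \emph{any} branched double cover of a K3. Since $f\circ i=f$, for a general point $p\in F$ the involution $i$ interchanges the two points of $f^{-1}(p)=\{p',p''\}$, so $f^{*}[p]=[p']+[p'']$ is $i$-invariant, and more generally $f^{*}A_0(F)\subseteq A_0(S)^{i}$. From $f_{*}f^{*}=2\,\mathrm{id}$ one has $\ker f^{*}\subseteq A_0(F)[2]$, so after $\otimes\,\QQ$ the pullback is injective and $f^{*}A_0(F)_{\QQ}\cong A_0(F)_{\QQ}\subseteq A_0(S)^{i}_{\QQ}$. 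Since $p_g(F)=1>0$, Mumford's theorem says $A_0(F)$ is not finite dimensional, and hence neither is $A_0(S)^{i}$: it cannot be dominated, even rationally, by the Jacobian of a curve or by any abelian variety.

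Then I would run the proof of Theorem \ref{theorem2} backwards. Suppose, for contradiction, that $J$ carried an ample line bundle $L$ with the stated property, i.e. that for every positive integer $m$ (as used there) the number of iterations in the Bloch--Kas--Lieberman reduction of the second lemma in the proof of Theorem \ref{theorem2} could be pushed past $g(C)=(L^{2}+L.K_J)/2+1$, where $C\in|L|$ is a general smooth member, so that the stabilisation $\lambda(J^m)=\lambda(J^{m-i+g})$ actually occurs. Then that lemma would give that $\im(\lambda g_{*})$ is finite dimensional, and the first lemma in the proof of Theorem \ref{theorem2} would then give that $A_0(S)^{i}$ is finite dimensional — contradicting the previous paragraph. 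Hence no such $L$ exists on $J$, which is the assertion.

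The step that needs care — and where the real content of the corollary lies — is the last one: one must check that the bounded-genus hypothesis of Theorem \ref{theorem2} feeds into its two lemmas \emph{only} through the requirement that the iteration count exceed $g(C)$, and then square this with the fact that by adjunction every member of $|L|$ on $J$ has the same arithmetic genus $(L^{2}+L.K_J)/2+1$. Thus the obstruction exhibited here is intrinsic to the ``double cover of an elliptic K3'' situation rather than something a clever choice of $L$ could evade, which is precisely what makes the non-existence statement meaningful; I expect this reconciliation, not any single computation, to be the delicate part of a complete write-up.
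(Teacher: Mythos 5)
Your proposal is correct and follows essentially the same route as the paper: assume such an ample $L$ exists, run Theorem \ref{theorem2} to conclude $A_0(S)^{i}$ (hence $A_0(F)$) is finite dimensional, and contradict Mumford's theorem since $p_g(F)=1>0$ for a K3 surface. You are in fact more careful than the paper's one-line proof, since you supply the needed link $f^{*}A_0(F)_{\QQ}\hookrightarrow A_0(S)^{i}_{\QQ}$ via $f_{*}f^{*}=2\,\mathrm{id}$, which the paper leaves implicit.
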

\begin{proof}
If there exists such line bundles then it will follow from the previous theorem  \ref{theorem2} that $A_0(F)$ is finite dimensional, which is not true because geometric genus of $F$ is greater than zero.
\end{proof}

\end{document}